\documentclass[twoside,a4paper,UTF8]{ctexart}   
\usepackage{amsmath,amsthm,makeidx,amssymb,tikz-cd}     
\usepackage{amsfonts}                     
\usepackage{graphicx}
\usepackage{xcolor}
\usepackage{lineno}
\usepackage{hyperref}
\usepackage{mathrsfs}
\usepackage{enumerate}
\usepackage[all]{xy}
\input scrload.tex    
\def\sec#1{
\vskip.12in \noindent{\Large\bf\zihao{-4}\heiti #1} \vskip.1in}

\catcode`@=11 \long\def\@makefntext#1{\parindent 1em\noindent
\hbox to 0pt{\hss$^{}$}#1} \catcode`\@=12

\newfont{\htxt}{eufm10 scaled \magstep0}
   
scaled \magstep0

\def\text#1{\hbox{\,#1\,}}

\def\text#1{\hbox{\,#1\,}}

\usepackage{stmaryrd}

\theoremstyle{definition}
\newtheorem{defn}{Definition}[section]
\newtheorem{lemma}[defn]{Lemma}
\newtheorem{theorem}[defn]{Theorem}
\newtheorem{proposition}[defn]{Proposition}

\newtheorem{remark}[defn]{Remark}

\begin{document}
 \abovedisplayskip=10.0pt plus 2.0pt minus 2.0pt
\belowdisplayskip=10.0pt plus 2.0pt minus 2.0pt
\ziju{0.025}

\vspace*{.25in}

\centerline{\LARGE\heiti\bf\zihao{2} One Decomposition of $K_2$-Group for}
\vskip .1in
\centerline{\LARGE\heiti\bf\zihao{2} Certain Quotients over $\mathbb{Z}[G]$ with } \vskip.1in
\centerline{\LARGE\heiti\bf\zihao{2} $G$ a Finite Abelian $p$-Group} \vskip.1in

\centerline{\large\fangsong\zihao{4} ZHANG Yakun$^{1,*}$}\vskip.13in

\baselineskip 12pt{\small {\it(1. School of Mathematics,  Nanjing Audit University, Nanjing 211815,  P. R. China)}}\vskip.23in

\footnotetext{E-mail: $*$ zhangyakun@nau.edu.cn}

{\small \narrower {\bf Abstract:}\ \ This paper investigates the structure of $K_2$-groups for certain quotient rings of the integral group ring $\mathbb{Z}[G]$. Let $G$ be a finite abelian $p$-group with $p$-rank $r$, let $\Gamma$ be the maximal $\mathbb{Z}$-order of $\mathbb{Q}[G]$, and let $\widetilde{G}$ denote the sum of all elements of $G$ in the group ring. By employing the framework of K\"{a}hler differentials, we first determine that the relative $K$-group $K_2(\mathbb{F}_p[G], (\widetilde{G}))$ is an elementary abelian $p$-group of rank $r$ when $|G|>2$. Building upon this result, we establish an explicit isomorphism for $r > 1$:
$$
K_2(\mathbb{Z}[G]/(|G|\Gamma \cap p\mathbb{Z}[G])) \cong K_2(\mathbb{Z}[G]/|G|\Gamma) \oplus K_2(\mathbb{F}_p[G], (\widetilde{G})).
$$

{\bf Key words:}\ \ $K_2$-groups; group rings; Dennis-Stein symbols; K\"{a}hler differentials

 {{\bf\heiti MR(2010) Subject Classification:}}\ \  16S34; 19C20; 19C99 / {{\bf\heiti
 CLC number:}}\ \ O154.3

 {\zihao{-5}{\bf Document code:}}\ \ A \qquad{{\bf\zihao{6} Article ID:}}\ \

} \par

\vskip.15in
\normalsize

\baselineskip 15pt

\sec{0 Introduction}
This paper is a continuation of our previous work \cite{smallK2}. Let $G$ be a finite abelian $p$-group with $p$-rank $r$, and let $\Gamma$ denote the maximal $\mathbb{Z}$-order in the group algebra $\mathbb{Q}[G]$. Set $J = |G|\Gamma$. By Proposition 2.2 in \cite{alperin1985sk}, $\Gamma$ is isomorphic to a direct product of rings of algebraic integers, with the inclusions $J \subseteq \mathbb{Z}[G] \subseteq \Gamma$ holding.

In \cite{ChenHong2014}, the group $K_2(\mathbb{Z}[G]/J)$ was investigated to establish a lower bound for the order of $K_2(\mathbb{Z}[G])$. It was shown that $K_2(\mathbb{Z}[G]/J)$ is trivial if and only if the order of $G$ is square-free. However, determining the explicit structure of $K_2(\mathbb{Z}[G]/J)$ remains an open problem, largely due to the complexity of $\Gamma$ (see \cite[Lemma 3.3]{ChenHong2014}). In our previous work \cite{smallK2}, we provided computational evidence and nontrivial examples for groups $G$ of small order.

Let $I = J \cap p\mathbb{Z}[G]$ and $\widetilde{G} = \sum_{g \in G} g$. These definitions give rise to the following Cartesian square:
\begin{equation} \label{cartesian}
\begin{tikzcd}
\mathbb{Z}[G]/I \arrow[r] \arrow[d] & \mathbb{Z}[G]/J \arrow[d] \\
\mathbb{F}_p[G] \arrow[r]           & \mathbb{F}_p[G]/(\widetilde{G})
\end{tikzcd}
\end{equation}
In the present paper, we establish an explicit relation between the $K_2$-groups of the rings lying in \eqref{cartesian}. The rest of the article is organized as follows: Section 1 is devoted to the necessary preliminaries.

In Section 2, we employ the framework of K\"{a}hler differentials to prove two key results for $|G|>2$, which are then applied to determine the explicit structure of $K_2(\mathbb{F}_p[G],(\widetilde{G}))$ (see Lemma \ref{K2FpG}). Furthermore, Theorem \ref{main-thm1} is used to establish  the excision property for $K_2$​ with respect to the cartesian square \eqref{cartesian}, as described in Proposition \ref{excision}.

As a consequence, in Section 3, we show that the following exact sequences of finite abelian $p$-groups split when $r>1$:
$$
1 \rightarrow  K_2(\mathbb{Z}[G]/I, J/I) \rightarrow K_2(\mathbb{Z}[G]/I) \rightarrow K_2(\mathbb{Z}[G]/J) \rightarrow 1,
$$
$$
1 \rightarrow K_2(\mathbb{F}_p[G], (\widetilde{G})) \rightarrow K_2(\mathbb{F}_p[G]) \rightarrow K_2(\mathbb{F}_p[G]/(\widetilde{G})) \rightarrow 1.
$$
Combining this with the aforementioned excision isomorphism, we establish the decomposition (see Theorem \ref{main-thm2}):
\begin{equation*}\label{eq:main_iso}
K_2(\mathbb{Z}[G]/I) \cong K_2(\mathbb{Z}[G]/J) \oplus \mathrm{Ker}\left(K_2(\mathbb{F}_p[G]) \rightarrow K_2(\mathbb{F}_p[G]/(\widetilde{G}))\right).
\end{equation*}
This result, obtained by extending the methods of \cite{ChenHong2014} to a more general setting, provides a structural decomposition for the $K_2$-groups of the quotient rings of $\mathbb{Z}[G]$ in square \eqref{cartesian}, and extends our previous computational studies to a general theoretical framework.

\sec{1 Some preliminaries}
\stepcounter{section}
According to \cite[Chapter III, \S 5]{weibel2013kbook}, if $I$ is a radical ideal of a commutative ring $R$, then $1-ab$ is a unit for all $a \in R, b \in I$, and the relative $K_2$-group $K_2(R, I)$ is generated by the \textit{Dennis--Stein symbols} $\langle a, b \rangle$ for $(a, b) \in (R \times I) \cup (I \times R)$. These symbols satisfy the following defining relations (written multiplicatively):
\begin{enumerate}
    \item[(DS1)] $\langle a, b \rangle \langle b, a \rangle = 1$
    \item[(DS2)] $\langle a, b \rangle \langle a, c \rangle = \langle a, b+c-abc \rangle$
    \item[(DS3)] $\langle a, bc \rangle = \langle ab, c \rangle \langle ac, b \rangle$ \quad (where at least one of $a, b, c$ belongs to $I$)
\end{enumerate}	

\begin{remark} \label{remak1}
In the present study, the ideal $I$ satisfies $I^2 = 0$, which naturally ensures the radical condition. Under this setting, the symbol $\langle a, b \rangle$ used here (following \cite{weibel2013kbook}) coincides with the notation $\langle -a, b \rangle^{-1}$ in earlier literature such as \cite{weibel1980k2}. Furthermore, the term $abc$ in (DS2) vanishes, which reduces the relation to $\langle a, b+c \rangle = \langle a, b \rangle \langle a, c \rangle$. By the skew-symmetry from (DS1), the symbol $\langle a, b \rangle$ is additive in both components. This linearity simplifies the structure of $K_2(R, I)$ to a framework compatible with K\"{a}hler differentials, as seen in Theorem \ref{main-thm1}.
\end{remark}

The following lemma is established by iterated applications of (DS1) and (DS3).
\begin{lemma} \cite[p.~255]{alperin1987sk} \label{scholium}
Let $\alpha_0, \dots, \alpha_l$ be elements of a ring such that $1 - \alpha_0 \cdots \alpha_l$ is invertible. Let $\hat{\alpha}_i = \alpha_0 \cdots \alpha_{i-1} \alpha_{i+1} \cdots \alpha_l$. Then
 $$1 = \langle 1, \alpha_0 \cdots \alpha_l \rangle = \prod_{i=0}^l \langle \alpha_i, \hat{\alpha}_i \rangle.$$
\end{lemma}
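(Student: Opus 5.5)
The first equality needs no work: $\langle 1, x\rangle = 1$ is a standard consequence of the Dennis--Stein relations. I plan to get it from (DS3) with $a=b=1$, $c=x$, which gives $\langle 1,x\rangle = \langle 1,x\rangle\langle x,1\rangle$, so $\langle x,1\rangle=1$; then (DS1) gives $\langle 1,x\rangle=1$ as well. All the substance is in the product formula, and I will prove it by induction on $l$, applying (DS3) once to split off one factor and (DS1) to reorder the symbols.

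For the base case $l=0$ the claim reads $\langle 1,\alpha_0\rangle = \langle \alpha_0, 1\rangle$, where $\hat\alpha_0$ is the empty product $1$. Both sides equal $1$ by the previous paragraph. For $l=1$, I apply (DS3) with $a=1$, $b=\alpha_0$, $c=\alpha_1$:
\[
\langle 1,\alpha_0\alpha_1\rangle = \langle \alpha_0,\alpha_1\rangle\langle \alpha_1,\alpha_0\rangle ,
\]
and these are exactly the two factors $\langle\alpha_0,\hat\alpha_0\rangle\langle\alpha_1,\hat\alpha_1\rangle$.

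For the inductive step, set $\beta = \alpha_0\cdots\alpha_{l-1}$ and apply the $l=1$ case to the pair $(\beta,\alpha_l)$:
\[
1=\langle 1,\beta\alpha_l\rangle = \langle \beta,\alpha_l\rangle\langle \alpha_l,\beta\rangle .
\]
The factor $\langle \alpha_l,\beta\rangle$ is already the term $\langle\alpha_l,\hat\alpha_l\rangle$. It remains to expand $\langle\beta,\alpha_l\rangle = \langle \alpha_0\cdots\alpha_{l-1},\alpha_l\rangle$ as $\prod_{i<l}\langle \alpha_i,\hat\alpha_i\rangle$. I will do this with a generalized (DS3): using (DS1) to move the product into the second slot, I need the identity
\[
\langle a, b_1\cdots b_m\rangle=\prod_{j}\Bigl\langle a\prod_{k\ne j}b_k,\; b_j\Bigr\rangle .
\]
This follows by induction on $m$ from (DS3) applied as $\langle a, b\,c\rangle = \langle ab,c\rangle\langle ac,b\rangle$ with $b=b_1\cdots b_{m-1}$ and $c=b_m$, followed by re-expanding $\langle ac,b\rangle$. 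Taking $a=\alpha_l$ and $b_j=\alpha_j$ for $j<l$ gives $\langle \alpha_l,\beta\rangle^{-1}$ as a product of the terms $\langle \hat\alpha_j,\alpha_j\rangle^{-1}$, and (DS1) converts these back to $\langle\alpha_j,\hat\alpha_j\rangle$.

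The main obstacle is hypothesis bookkeeping. (DS3) requires one entry in the ideal, or more generally that each symbol be defined, i.e.\ that $1-ab$ be a unit for every symbol that appears. Every symbol here has the form $\langle x,y\rangle$ with $xy=\alpha_0\cdots\alpha_l$, so $1-xy$ is invertible by hypothesis and all symbols are defined. I will therefore work in the version of the Dennis--Stein presentation, used in \cite{alperin1987sk}, in which (DS3) holds whenever the relevant elements $1-abc$ are units. The identity is also invariant under cyclic relabeling, and I will use that symmetry to check that the ordering of factors matters only up to (DS1) and commutativity of $K_2$.
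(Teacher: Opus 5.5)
Your proof is correct and follows the route the paper indicates: iterated use of (DS1) and (DS3), here through your generalized (DS3), with the bookkeeping handled correctly since every symbol $\langle x,y\rangle$ that appears has $xy=\alpha_0\cdots\alpha_l$. The outer induction on $l$ is superfluous, though: applying your generalized (DS3) once with $a=1$ and $b_j=\alpha_j$ gives $\langle 1,\alpha_0\cdots\alpha_l\rangle=\prod_i\langle\hat\alpha_i,\alpha_i\rangle$, and then $\langle 1,x\rangle=1$ together with (DS1) gives the result directly.
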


Given a ring homomorphism $\phi: R \to S$ that endows $S$ with an $R$-algebra structure, the module of K{\"a}hler differentials of $S$ over $R$ which we denoted by $\Omega_{S/R}$, is the $S$-module generated by the set $\{\mathrm{d}(f)|f\in S\}$ subjecting to the following relations:
	\begin{enumerate}[(i)]
		\item $\mathrm{d}(s_1s_2)=s_1\mathrm{d}(s_2)+s_2\mathrm{d}(s_1)$;
		\item $\mathrm{d}(r_1s_1+r_2s_2)=s_1\mathrm{d}(s_2)+s_2\mathrm{d}(s_1)$;
		\item $1\cdot \mathrm{d}(r)=0$ for all $r\in R$.
	\end{enumerate} 
We will abbreviate $\Omega_{S/\mathbb{Z}}$  as $\Omega_{S}$ if there is no confusion.
The following exact sequence is known as the conormal sequence.
\begin{lemma}\cite[Proposition 16.12]{eisenbud2013commutative}\label{Conormal}
	If $\pi: S\rightarrow T$ is an epimorphism of $R$-algebras, with kernel $I$, then there is an exact sequence of $T$-modules
	\begin{equation*}
		I/I^2 \xrightarrow{\mathrm{d}} T\otimes_{S/R}\Omega_{S/R} \xrightarrow{\mathrm{D}\pi} \Omega_{T/R} \rightarrow 0, 
	\end{equation*}
where the right-hand map is given by $\mathrm{D}\pi : c\otimes \mathrm{d}b \mapsto c\mathrm{d}b$ and the left-hand map takes the class of $f$ to $1\otimes \mathrm{d}f$.
\end{lemma}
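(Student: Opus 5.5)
The natural route is through the universal property of K\"ahler differentials: show that the map $\mathrm{D}\pi$ identifies $\Omega_{T/R}$ with the cokernel of $\mathrm{d}$. Throughout, write $T\otimes_{S}\Omega_{S/R}=\Omega_{S/R}/I\Omega_{S/R}$, which is a $T$-module because $I$ annihilates it. The plan has four steps: check that both maps are well-defined $T$-linear maps, check that $\mathrm{D}\pi$ is surjective, check that the composite is zero, and then prove exactness in the middle by constructing an inverse on the cokernel.

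\emph{Well-definedness and easy parts.} The map $f\mapsto 1\otimes \mathrm{d}f$ on $I$ is additive. It kills $I^2$, since for $a,b\in I$ we have $\mathrm{d}(ab)=a\,\mathrm{d}b+b\,\mathrm{d}a\in I\Omega_{S/R}$. For $s\in S$ and $f\in I$ we have $\mathrm{d}(sf)=s\,\mathrm{d}f+f\,\mathrm{d}s\equiv s\,\mathrm{d}f$ modulo $I\Omega_{S/R}$, so the induced map $I/I^2\to T\otimes_S\Omega_{S/R}$ is $T$-linear. The map $\mathrm{D}\pi$, given by $c\otimes \mathrm{d}b\mapsto c\,\mathrm{d}(\pi b)$, is induced by the $R$-derivation $S\to\Omega_{T/R}$, $s\mapsto \mathrm{d}(\pi s)$, through the universal property of $\Omega_{S/R}$, and then by extension of scalars to $T$. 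It is surjective because $\pi$ is onto, so every generator $\mathrm{d}t$ of $\Omega_{T/R}$ has the form $\mathrm{d}(\pi s)$. The composite $\mathrm{D}\pi\circ\mathrm{d}$ vanishes, since for $f\in I$ it sends $f$ to $\mathrm{d}(\pi f)=\mathrm{d}(0)=0$.

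\emph{Exactness in the middle (main step).} Let $C$ be the cokernel of $\mathrm{d}$, that is, $\Omega_{S/R}$ modulo $I\Omega_{S/R}+S\,\mathrm{d}I$. Define $\delta:T\to C$ by $\delta(\pi s)=[\mathrm{d}s]$. This is well-defined, because if $\pi s=\pi s'$ then $s-s'\in I$ and $\mathrm{d}(s-s')\in \mathrm{d}I$. The Leibniz rule for $\delta$ follows from the Leibniz rule in $\Omega_{S/R}$, using that $C$ is a $T$-module. The map $\delta$ is additive, and it kills the image of $R$ because $\mathrm{d}$ kills $R$ in $\Omega_{S/R}$. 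By the universal property of $\Omega_{T/R}$, we obtain a $T$-linear map $\bar\delta:\Omega_{T/R}\to C$ with $\mathrm{d}(\pi s)\mapsto[\mathrm{d}s]$. The map $\mathrm{D}\pi$ factors through a map $\overline{\mathrm{D}\pi}:C\to\Omega_{T/R}$, since $\mathrm{D}\pi\circ\mathrm{d}=0$. One checks on generators that $\overline{\mathrm{D}\pi}$ and $\bar\delta$ are mutually inverse: $[c\,\mathrm{d}s]\mapsto c\,\mathrm{d}(\pi s)\mapsto [c\,\mathrm{d}s]$, and in the other direction $\mathrm{d}(\pi s)\mapsto[\mathrm{d}s]\mapsto \mathrm{d}(\pi s)$. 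Hence $\ker\mathrm{D}\pi=\operatorname{im}\mathrm{d}$.

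The only real care needed is in this third step: we must verify that $\delta$ is a genuine $R$-derivation with values in the $T$-module $C$. This in turn requires checking that $C$ is indeed a $T$-module, which holds because $I$ acts by zero after quotienting by $I\Omega_{S/R}$.
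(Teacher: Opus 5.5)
Your proof is correct and complete. The well-definedness and $T$-linearity of $\mathrm{d}$, the surjectivity of $\mathrm{D}\pi$, and $\mathrm{D}\pi\circ\mathrm{d}=0$ all check out. So does the construction of $\bar\delta\colon\Omega_{T/R}\to C$ from the $R$-derivation $\pi s\mapsto[\mathrm{d}s]$. In fact $\bar\delta\circ\overline{\mathrm{D}\pi}=\mathrm{id}_C$ alone already gives $\ker\mathrm{D}\pi=\operatorname{im}\mathrm{d}$. You also correctly read the statement's $T\otimes_{S/R}\Omega_{S/R}$ as $T\otimes_S\Omega_{S/R}$.

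The paper gives no argument for this lemma; it simply quotes Eisenbud. Your direct universal-property argument is a standard way to prove it. A common alternative phrasing applies $\mathrm{Hom}_T(-,M)$ and checks that $0\to\mathrm{Der}_R(T,M)\to\mathrm{Der}_R(S,M)\to\mathrm{Hom}_T(I/I^2,M)$ is exact for every $T$-module $M$. That version rests on the same key fact you use: an $R$-derivation of $S$ into a $T$-module that kills $I$ descends to $T$. Your cokernel $C$ is just the universal choice of test module, so the two arguments are equivalent.
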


\sec{2 Some results on K\"{a}hler differentials}
\stepcounter{section}
In this section, we provide two technical results necessary for the following sections. Lemma \ref{Omega} is a special case of \cite[Lemma 1]{magurn2007explicit}, for which we offer an alternative proof. This result will be applied to establish Lemma \ref{K2FpG}. Additionally, Theorem \ref{main-thm1} is required to prove Proposition \ref{excision}.

\begin{lemma}\label{Omega}
	Let $G=\langle g_1\rangle \times \cdots \times \langle g_r\rangle$ be a finite abelian $p$-group.  Then $\Omega_{\mathbb{F}_p[G]}$ is a free $\mathbb{F}_p[G]$-module with basis $\mathrm{d}g_1,\cdots, \mathrm{d}g_r$.
\end{lemma}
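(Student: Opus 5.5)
We present $\mathbb{F}_p[G]$ as a quotient of a polynomial ring and compute $\Omega$ with the conormal sequence of Lemma \ref{Conormal}. Write $|g_i| = p^{n_i}$. Then
$$\mathbb{F}_p[G] \cong S/\mathfrak{a}, \qquad S=\mathbb{F}_p[x_1,\dots,x_r], \qquad \mathfrak{a}=(x_1^{p^{n_1}}-1,\dots,x_r^{p^{n_r}}-1),$$
with $x_i \mapsto g_i$. Since $\mathbb{F}_p$ is a prime field, $\Omega_{\mathbb{F}_p[G]/\mathbb{Z}} = \Omega_{\mathbb{F}_p[G]/\mathbb{F}_p}$: indeed $\mathrm{d}(1)=0$, and $\mathbb{Z}$-linearity already kills $\mathrm{d}$ of every element of the prime field. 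So we may work over $R=\mathbb{F}_p$.

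Next recall that $\Omega_{S/\mathbb{F}_p}$ is the free $S$-module on $\mathrm{d}x_1,\dots,\mathrm{d}x_r$. Put $T=\mathbb{F}_p[G]$. Then $T\otimes_S \Omega_{S}$ is free over $T$ on $1\otimes \mathrm{d}x_i$. Lemma \ref{Conormal} gives the exact sequence
$$\mathfrak{a}/\mathfrak{a}^2 \xrightarrow{\mathrm{d}} \bigoplus_{i=1}^r T\,\mathrm{d}x_i \longrightarrow \Omega_{T} \to 0 .$$
The image of $\mathrm{d}$ is the $T$-submodule generated by the differentials of the generators of $\mathfrak{a}$. For these we compute
$$\mathrm{d}\bigl(x_i^{p^{n_i}}-1\bigr) = p^{n_i}x_i^{p^{n_i}-1}\,\mathrm{d}x_i = 0 \quad\text{in characteristic } p.$$
An arbitrary element of $\mathfrak{a}$ has the form $\sum s_i f_i$ with $f_i = x_i^{p^{n_i}}-1$. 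Its differential is $\sum (f_i\,\mathrm{d}s_i + s_i\,\mathrm{d}f_i)$, and each term vanishes after tensoring with $T$, since $f_i\mapsto 0$ in $T$ and $\mathrm{d}f_i=0$. So the left map is zero, and $\mathrm{D}\pi$ is an isomorphism $\bigoplus T\,\mathrm{d}x_i \cong \Omega_T$ sending $\mathrm{d}x_i \mapsto \mathrm{d}g_i$. This gives the freeness claim with basis $\mathrm{d}g_1,\dots,\mathrm{d}g_r$.

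The only point needing care is the identification of $\Omega$ over $\mathbb{Z}$ with $\Omega$ over $\mathbb{F}_p$. The alternative would be to apply Lemma \ref{Conormal} with $R=\mathbb{Z}$ and $S=\mathbb{Z}[x_1,\dots,x_r]$, where the kernel also contains $p$. In that setting $\mathrm{d}p=0$ and $\mathrm{d}(x_i^{p^{n_i}}-1)=p^{n_i}x_i^{p^{n_i}-1}\mathrm{d}x_i$, which becomes $0$ in $T\otimes\Omega_S$ since $p=0$ in $T$. So the same conclusion follows directly. I expect no real obstacle; the rest is routine verification that $\mathrm{d}$ of the ideal generators vanishes.
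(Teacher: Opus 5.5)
Your proof is correct and follows the same route as the paper: present $\mathbb{F}_p[G]$ as $\mathbb{F}_p[x_1,\dots,x_r]/(x_1^{p^{n_1}}-1,\dots,x_r^{p^{n_r}}-1)$, use the conormal sequence of Lemma \ref{Conormal} to see that the left-hand map vanishes, and conclude from the freeness of $\Omega_{\mathbb{F}_p[x_1,\dots,x_r]}$ on $\mathrm{d}x_1,\dots,\mathrm{d}x_r$. Your extra care fills in points that the paper's terse ``$\mathrm{d}(I)=0$'' glosses over: $\mathrm{d}$ of a general element $\sum s_i f_i$ of the ideal vanishes only after tensoring with $T$, not in $\Omega_S$ itself, and $\Omega$ over $\mathbb{Z}$ agrees with $\Omega$ over $\mathbb{F}_p$.
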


\begin{proof}
Let $p^{n_i}$ be the order of $g_i, 1\leq i\leq r$.  Obviously, the $\mathbb{F}_p$-algebra 
$S=\mathbb{F}_p[x_1, \cdots, x_r]$ maps onto $T=\mathbb{F}_p[G]$, and the kernel $I$ is generated by $x_1^{p^{n_1}}-1,\cdots, x_r^{p^{n_r}}-1$. Since $\mathrm{d}(I)=0$,
by Lemma \ref{Conormal}, we have
$$\Omega_{\mathbb{F}_p[G]}\cong \mathbb{F}_p[G]\otimes_{\mathbb{F}_p[x_1, \cdots, x_r]}\Omega_{\mathbb{F}_p[x_1, \cdots, x_r]},$$
then our lemma follows from the fact that $\Omega_{\mathbb{F}_p[x_1, \cdots, x_r]}$ is 
a free $\mathbb{F}_p[x_1, \cdots, x_r]$-module with basis $\mathrm{d}x_1,\cdots,\mathrm{d}x_r$.
\end{proof}

\begin{theorem} \label{main-thm1}
	Let $A$ be a commutative ring, $I$ an ideal of $A$.  Suppose  $J=(b_1\cdots b_r)A+I$ is an ideal of $A$ with $r>1$ such that $b_iJ\subset I$ for each $i$, then
	\begin{equation*}
		K_2(A/I, J/I)\cong J/I\otimes_{A/I}\Omega_{A/I}.
	\end{equation*}	
\end{theorem}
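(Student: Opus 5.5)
Replace $A$ by $R=A/I$ and $J$ by $N=J/I$. Then $N=bR$ with $b=\bar b_1\cdots\bar b_r$, and $\bar b_iN=0$ for each $i$. In particular $N^2=0$, since $N^2=b^2R\subseteq \bar b_1 N=0$. By Remark \ref{remak1}, $K_2(R,N)$ is generated by Dennis--Stein symbols $\langle x,y\rangle$ with one entry in $N$. These symbols are biadditive and skew. The plan is to build mutually inverse homomorphisms
$$\Phi: N\otimes_R\Omega_R\to K_2(R,N),\quad n\otimes \mathrm{d}a\mapsto \langle n,a\rangle ,\qquad \Psi: K_2(R,N)\to N\otimes_R\Omega_R,\quad \langle n,a\rangle\mapsto n\otimes\mathrm{d}a .$$
We put $\Psi\langle a,n\rangle=-n\otimes \mathrm{d}a$. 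For $a,n\in N$ both formulas agree, because $n\otimes \mathrm{d}a+a\otimes\mathrm{d}n=0$ in this case.

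\textbf{Step 1: $\Psi$ is well defined.} We check (DS1)--(DS3). (DS1) and (DS2) hold because $abc\in N^2=0$ and the tensor is additive. For (DS3), $\langle a,bc\rangle=\langle ab,c\rangle\langle ac,b\rangle$, suppose first that $a\in N$. The required identity is $a\otimes \mathrm{d}(bc)=ab\otimes\mathrm{d}c+ac\otimes \mathrm{d}b$, which is the Leibniz rule. If $b\in N$ (or $c\in N$), we expand using skewness and get $-b\otimes c\,\mathrm{d}a = ab\otimes\mathrm{d}c - c\otimes \mathrm{d}(ab)$ up to sign bookkeeping, which again follows from Leibniz. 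This is the standard computation giving the surjection $N\otimes\Omega_R\to K_2(R,N)$ for square-zero ideals, as in van der Kallen/Maazen--Stienstra.

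\textbf{Step 2: $\Phi$ is well defined, and this is the main obstacle.} $N\otimes_R\Omega_R$ is generated by $n\otimes \mathrm{d}a$. We must show that $(n,a)\mapsto\langle n,a\rangle$ is $R$-balanced and satisfies Leibniz, i.e.
$$\langle rn,a\rangle=\langle n,ra\rangle-\langle na,r\rangle\quad\text{and}\quad \langle n,aa'\rangle=\langle na,a'\rangle\langle na',a\rangle .$$
The Leibniz relation is exactly (DS3) with first entry in $N$. Balancing $n\otimes r\mathrm{d}a=rn\otimes\mathrm{d}a$ means $\langle rn,a\rangle=\langle n,ra\rangle\langle n,a\rangle$-type relations, i.e. $\langle rn,a\rangle=\langle n, ra\rangle-\langle na,r\rangle$. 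These do not follow from (DS3) alone, since (DS3) gives $\langle n,ra\rangle=\langle nr,a\rangle\langle na,r\rangle$. So in fact they do follow, and this reduces everything to the usual question of whether $\langle n,a\rangle$ with $n\in N$ and $a\in R$ generate and satisfy only the relations of $N\otimes\Omega_R$. The real issue is the symbols $\langle n,n'\rangle$ with both entries in $N$. In general these give an extra quotient $\Lambda^2$-type term, and the kernel of $N\otimes\Omega_R\to K_2$ is spanned by elements such as $n\otimes \mathrm{d}n'$ modulo symmetrization.

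Here the hypothesis $r>1$ is used. Write $n=bx$ and $n'=by$. Lemma \ref{scholium} applied to the factorization $b=\bar b_1\cdot(\bar b_2\cdots\bar b_r)$ should show that $\langle bx,by\rangle$ equals a product of symbols of the form $\langle \bar b_i\cdot(\text{stuff}), \ldots\rangle$ in which some entry is $\bar b_i N=0$. Hence $\langle n,n'\rangle=1$. Correspondingly, $n\otimes\mathrm{d}n'=bx\otimes \mathrm{d}(by)$, and expanding $\mathrm{d}(by)$ by Leibniz over the factors $\bar b_i$ gives terms $bx\cdot(\cdots)\otimes\mathrm{d}\bar b_i$. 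Moving the scalar across, one factor $\bar b_j$ with $j\ne i$ hits $bx$ and gives $0$; this needs $r\ge2$. Thus both sides kill the $N\times N$ part, and the two presentations coincide.

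\textbf{Step 3: conclude.} $\Phi$ and $\Psi$ are inverse on generators, so they are inverse isomorphisms, yielding $K_2(A/I,J/I)\cong J/I\otimes_{A/I}\Omega_{A/I}$. I expect the delicate point to be the careful verification in Step 2: that the vanishing from $r>1$ removes exactly the discrepancy between the Dennis--Stein presentation and the Kähler presentation for square-zero ideals. I would organize this by citing or reproving the square-zero presentation $K_2(R,N)\cong (N\otimes\Omega_R)/\langle n\otimes \mathrm{d}n'\ :\ n,n'\in N\rangle$, or its known variant, and then showing that the subgroup being quotiented is already zero.
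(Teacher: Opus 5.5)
Your proof takes a genuinely different route from the paper's. The paper checks no relations by hand. It quotes Weibel's exact sequence $J/I\otimes_{A/J}J/I\xrightarrow{\psi}K_2(A/I,J/I)\xrightarrow{\rho}J/I\otimes_{A/J}\Omega_{A/J}\to 0$ and kills $\operatorname{Im}\psi$ with Lemma~\ref{scholium}. It then tensors the conormal sequence of $A/I\to A/J$ with $J/I$ to replace $\Omega_{A/J}$ by $\Omega_{A/I}$, killing $\operatorname{Im}\delta^*$ by a Leibniz expansion. You instead compare the Dennis--Stein presentation of $K_2(R,N)$ directly with a presentation of $N\otimes_R\Omega_R$, and write down inverse maps $\langle n,a\rangle\leftrightarrow n\otimes\mathrm{d}a$. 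Both proofs use the hypothesis $r>1$ in the same two places:
(a) $\langle n,n'\rangle=1$ for $n,n'\in N$;
(b) $n\otimes\mathrm{d}n'=0$ in $N\otimes_R\Omega_R$.
Your checks of (a) (splitting $b=\bar b_1\cdot(\bar b_2\cdots\bar b_r)$) and of (b) (expanding $\mathrm{d}(by)$) are correct. Your route lands directly in $\Omega_{A/I}$ and needs neither Weibel's theorem nor the conormal step. It also gives the explicit isomorphism that Lemma~\ref{K2FpG} uses to transport its basis. The paper's route instead hands all relation-checking to Weibel's theorem.

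Several of your intermediate justifications are wrong as stated. Each is repaired by (a) or (b), so you should prove (a) and (b) first.
\begin{itemize}
\item In (DS2) with $a\in N$ and $b,c\in R$, the element $abc$ lies in $N$, not in $N^2$, and need not vanish. What $\Psi$ actually needs is $a\otimes\mathrm{d}(abc)=abc\otimes\mathrm{d}a=0$, which is (b).
\item The agreement of your two formulas for $\Psi$ on $N\times N$ is also (b); it is not automatic.
\item For $\Phi$, additivity of $\langle n,\cdot\rangle$ in the $R$-slot does not follow from $N^2=0$, so you cannot simply invoke Remark~\ref{remak1}. (DS2) gives $\langle n,a\rangle\langle n,a'\rangle=\langle n,a+a'-naa'\rangle$, and dropping $naa'$ needs (a).
\end{itemize}

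The ``standard surjection $N\otimes\Omega_R\to K_2(R,N)$'' and the presentation $K_2(R,N)\cong(N\otimes\Omega_R)/\langle n\otimes\mathrm{d}n'\rangle$ that you propose to cite are false for general square-zero $N$. Take $R=\mathbb{Z}/4$ and $N=2R$. Then $\Omega_R=0$, yet $K_2(R,N)$ maps onto $K_2(\mathbb{Z}/4)\cong\mathbb{Z}/2$. The same example refutes biadditivity in both slots: it would force every generator $\langle 2,x\rangle=\langle 2,1\rangle^{x}$ to be trivial, since $\langle 2,1\rangle=1$ by (DS3). The correct general statement is Weibel's sequence, with $\Omega_{R/N}$ and an $N\otimes N$ term, which is what the paper uses.

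So rely on your direct construction. Step~2 becomes clean if you use that $N\otimes_R\Omega_R$ is the abelian group on symbols $[n,a]$, additive in each variable, modulo $[n,aa']=[na,a']+[na',a]$. Balancing is then built in, and well-definedness of $\Phi$ reduces to (DS1)--(DS3) together with (a).
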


\begin{proof}
Let $p^{n_i}$ be the order of $g_i$ for $1\leq i\leq r$. Consider the surjective $\mathbb{F}_p$-algebra homomorphism from $S=\mathbb{F}_p[x_1, \cdots, x_r]$ to $T=\mathbb{F}_p[G]$. The kernel $I$ is generated by $x_1^{p^{n_1}}-1, \cdots, x_r^{p^{n_r}}-1$. Since $\mathrm{d}(I)=0$, by Lemma \ref{Conormal}, we have
$$
\Omega_{\mathbb{F}_p[G]}\cong \mathbb{F}_p[G]\otimes_{\mathbb{F}_p[x_1, \cdots, x_r]}\Omega_{\mathbb{F}_p[x_1, \cdots, x_r]}.
$$
The result follows from the fact that $\Omega_{\mathbb{F}_p[x_1, \cdots, x_r]}$ is a free $\mathbb{F}_p[x_1, \cdots, x_r]$-module with basis $\mathrm{d}x_1, \cdots, \mathrm{d}x_r$.
\end{proof}

\begin{theorem} \label{main-thm1}
Let $A$ be a commutative ring and $I$ be an ideal of $A$. Suppose $J=(b_1\cdots b_r)A+I$ is an ideal of $A$ with $r>1$ such that $b_iJ\subset I$ for each $i$. Then
\begin{equation*}
    K_2(A/I, J/I)\cong J/I\otimes_{A/I}\Omega_{A/I}.
\end{equation*}    
\end{theorem}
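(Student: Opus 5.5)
The plan is to reduce to the case $I=0$. Put $R=A/I$ and $N=J/I$, and write $b=b_1\cdots b_r\in R$, so that $N=bR$ and $b_iN=0$ for every $i$. Since $b\in b_1R$, we get $N^2=bN\subseteq b_1N=0$. So $N$ is square-zero, hence radical, and $K_2(R,N)$ is generated by Dennis--Stein symbols $\langle a,n\rangle$ and $\langle n,a\rangle$ with $a\in R$, $n\in N$, subject to (DS1)--(DS3). By (DS1) we may use only the symbols $\langle n,a\rangle$. I would construct mutually inverse homomorphisms
$$\Phi: N\otimes_R\Omega_R\to K_2(R,N),\quad n\otimes \mathrm{d}a\mapsto \langle n,a\rangle, \qquad \Psi: K_2(R,N)\to N\otimes_R\Omega_R,\quad \langle n,a\rangle\mapsto n\otimes \mathrm{d}a .$$
Once both are well defined, they are visibly inverse on generators.

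\emph{Well-definedness of $\Phi$.} The target is generated by symbols, so define $\Phi$ on the free abelian group on pairs $(n,a)$ and check the defining relations of $N\otimes_R\Omega_R$.
\begin{enumerate}
\item Additivity in $n$ holds because (DS2) with two entries in $N$ has its cubic term in $N^2=0$.
\item Leibniz: (DS3) gives $\langle n,aa'\rangle=\langle na,a'\rangle\langle na',a\rangle$, which is exactly the image of $n\otimes\mathrm{d}(aa')=na\otimes\mathrm{d}a'+na'\otimes\mathrm{d}a$. The same relation with $a'$ an integer handles $R$-balancedness and $\mathrm{d}(1)=0$.
\item Additivity in $a$: (DS2) gives $\langle n,a\rangle\langle n,a'\rangle=\langle n,a+a'-naa'\rangle$, and the cubic term no longer vanishes. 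Applying (DS2) once more, this equals $\langle n,a+a'\rangle\langle n,-naa'\rangle$, since now the cubic term lies in $N^2=0$. So everything reduces to the vanishing of $\langle n,n'\rangle$ for $n,n'\in N$.
\end{enumerate}

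\emph{Vanishing of $\langle n,n'\rangle$; this is where $r>1$ enters.} Write $n'=bz$ and $b=b_1b''$ with $b''=b_2\cdots b_r$. By (DS3),
$$\langle n, b_1(b''z)\rangle=\langle nb_1,b''z\rangle\langle nb''z,b_1\rangle.$$
Here $nb_1=0$, and $nb''\in b_2N=0$ precisely because $r>1$. Both factors are therefore $\langle 0,\cdot\rangle=1$. I expect this to be the crux of the argument.

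\emph{Well-definedness of $\Psi$.} Check (DS1)--(DS3) on the images.
\begin{enumerate}
\item (DS3) maps to the Leibniz rule, as above.
\item (DS2): the cubic term contributes $n\otimes\mathrm{d}(naa')$ or $n\otimes\mathrm{d}(\text{element of }N)$. So we need $n\otimes\mathrm{d}n'=0$ in $N\otimes\Omega_R$ for $n,n'\in N$.
\item (DS1) for two entries in $N$ requires the same vanishing.
\end{enumerate}
To prove this vanishing, write $n'=bz$ and expand $\mathrm{d}(bz)=z\,\mathrm{d}b+b\,\mathrm{d}z$. Move the coefficients across the tensor onto $n$. The term $nb$ is $0$. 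For $\mathrm{d}b=\sum_i (b/b_i)\,\mathrm{d}b_i$, each coefficient $b/b_i$ contains some $b_j$ with $j\neq i$, again using $r>1$. Hence $n(b/b_i)=0$, and every term dies.

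The main obstacle is making this bookkeeping airtight. In particular, one must confirm that the presentation of $K_2(R,N)$ by (DS1)--(DS3) quoted in Section 1 is complete, so that checking relations on generators suffices for $\Psi$. One must also confirm that symbols of both types $\langle a,n\rangle$ and $\langle n,a\rangle$ are handled consistently via (DS1), and that no non-commutative subtlety arises, as none should since $A$ is commutative.
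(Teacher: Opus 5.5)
Your proof is correct, and it follows a genuinely different route from the paper's. The paper does not build maps on generators. It quotes Weibel's exact sequence $N\otimes_{R/N}N\xrightarrow{\psi}K_2(R,N)\xrightarrow{\rho}N\otimes_{R/N}\Omega_{R/N}\to 0$ for the square-zero ideal $N$. It kills $\psi$ by expanding $\langle bs,bt\rangle$ with the $(r+1)$-factor scholium lemma. It then tensors the conormal sequence $N\to R/N\otimes_R\Omega_R\to\Omega_{R/N}\to 0$ with $N$ to identify $N\otimes_{R/N}\Omega_{R/N}$ with $N\otimes_R\Omega_R$; the input needed there is $bt\otimes\mathrm{d}(bs)=0$.

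Your two vanishing statements, $\langle n,n'\rangle=1$ and $n\otimes\mathrm{d}n'=0$, are exactly these two inputs. You instead feed them into a direct check that $\langle n,a\rangle\leftrightarrow n\otimes\mathrm{d}a$ are well-defined, mutually inverse maps, so you never pass through $\Omega_{R/N}$. Your single application of (DS3) to $\langle n,b_1\cdot(b_2\cdots b_rz)\rangle$ is also a shorter proof of $\langle n,n'\rangle=1$ than the paper's.

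The paper's version is shorter on the page because the relation-checking is hidden inside Weibel's theorem. Yours is self-contained once the presentation is granted, and it gives the isomorphism onto $N\otimes_R\Omega_R$ in explicit form. That makes the transfer of the basis $\widetilde{G}\otimes\mathrm{d}x_i$ to the symbols $\langle x_i,\widetilde{G}\rangle$ in the next lemma immediate.

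The obstacle you flag is not a real one. For a radical (here square-zero) ideal of a commutative ring, (DS1)--(DS3) are a complete set of defining relations by Keune's theorem. The paper states them this way in Section 1, so checking them on generators suffices.

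Two minor points:
\begin{enumerate}
\item If you present $N\otimes_R\Omega_R$ by generators $[n,a]$, biadditivity, and $[n,aa']=[na,a']+[na',a]$, then $R$-balancedness is built in. Also $\mathrm{d}(1)=0$ is just the case $a=a'=1$ of the Leibniz relation. So your remark about taking $a'$ an integer is unnecessary.
\item For $\Psi$ you should also write out (DS3) when the element of $N$ sits in the second or third slot. This works by the same Leibniz expansion together with the convention $\langle a,n\rangle\mapsto -n\otimes\mathrm{d}a$.
\end{enumerate}
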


\begin{proof}
Since $b_iJ\subset I$ for each $i$, it follows that $J^2\subset I$; thus, for any $(J/I)^2 = 0$. According to Theorem 1.3 in \cite{weibel1980k2}, there exists an exact sequence of $A/J$-modules:
$$
J/I\otimes_{A/J}J/I\xrightarrow{\psi} K_2(A/I,J/I)\xrightarrow{\rho} J/I\otimes_{A/J}\Omega_{A/J}\rightarrow 0,
$$
where $K_2(A/I,J/I)$ is generated by the Dennis-Stein symbols $\langle a, b\rangle$ with $a$ or $b$ in $J/I$. The map $\psi$ is defined by $a\otimes b \mapsto \langle a, b\rangle$, and by Remark \ref{remak1} it is straightforward to verify that $\psi$ is a bilinear map. The map $\rho$ is given by
$$
\rho \langle a, b\rangle =
\begin{cases}
   a \otimes \mathrm{d}b, & \text{if } a \in J/I; \\
   -b \otimes \mathrm{d}a, & \text{if } b \in J/I.        
\end{cases}
$$

For $s, t \in A$, let $\alpha_0 = s$ and $\alpha_i = b_i$ for $1 \leq i \leq r$. We have
\begin{align*}
    & \quad \psi(b_1b_2\cdots b_{r}s \otimes b_1b_2\cdots b_{r}t)\\
    &= \langle b_1b_2\cdots b_{r}s, b_1b_2\cdots b_rt \rangle \\
    &= \langle s, (b_1b_2\cdots b_r)^2t \rangle \prod_{i=1}^r \langle b_i, (b_1b_2\cdots b_r)^2st/b_i \rangle \tag{by Lemma \ref{scholium}} \\
    &= 1. \tag{since $b_iJ \subset I$}
\end{align*}
The second-last equality follows by regarding the first entry of $\langle b_1 \cdots b_r s, b_1 \cdots b_r t \rangle$ as a product of $r+1$ factors and leaving the second entry unchanged.
The last equality holds because $r > 1$ and $b_iJ \subset I$ for each $i$, so that the elements $(b_1\cdots b_r)^2t$ and $(b_1\cdots b_r)^2st/b_i$ all lie in $I$.
In the relative group $K_2(A/I, J/I)$, these elements represent $0$ in $A/I$, which implies that every Dennis-Stein symbol in the product evaluates to the identity element $1$.

Thus, $\operatorname{Im}(\psi) = 1$, it follows that  $K_2(A/I, J/I)\cong J/I\otimes_{A/J}\Omega_{A/J}$. To complete the proof, it suffices to show that
$J/I\otimes_{A/J}\Omega_{A/J}\cong J/I\otimes_{A/I}\Omega_{A/I}$. According to Lemma \ref{Conormal}, there is an exact sequence of $A/J$-modules:
$$
J/I\xrightarrow{\delta} A/J\otimes_{A/I}\Omega_{A/I} \rightarrow \Omega_{A/J}\rightarrow 0.
$$        
Tensoring the sequence above with $J/I$, we obtain
$$
J/I\otimes_{A/J}J/I \xrightarrow{\delta^{*}}\Omega_{A/I}\otimes_{A/I}J/I\rightarrow J/I\otimes_{A/J}\Omega_{A/J}\rightarrow 0,
$$
where $\delta^{*}$ is a bilinear map. For $s, t \in A$, we have
\begin{align*}
    \delta^{*}(b_1b_2\cdots b_{r}s\otimes b_1b_2\cdots b_{r}t) 
    &= b_1b_2\cdots b_{r}t \otimes \mathrm{d}(b_1b_2\cdots b_{r}s) \\
    &= (b_1b_2\cdots b_r)^2t \otimes \mathrm{d}s + 
    \sum_{i=1}^{r} (b_1b_2\cdots b_r)^2t/b_i\otimes \mathrm{d}b_i \\
     &= 0 \otimes \mathrm{d}s + 
    \sum_{i=1}^{r} 0\otimes \mathrm{d}b_i \\
    &= 0.
\end{align*}
Hence, $J/I\otimes_{A/J}\Omega_{A/J}\cong J/I\otimes_{A/I}\Omega_{A/I}$. This completes the proof of the theorem.    
\end{proof}

\sec{3 On $K_2$-relations for rings in a Cartesian square}
\stepcounter{section}
In this section, we extend several results from \cite[Section 4]{ChenHong2014} concerning $K_2$-relations for rings lying in the Cartesian square \eqref{cartesian}.

Suppose $G = C_{p^{n_1}} \times \cdots \times C_{p^{n_r}}$ is a finite abelian $p$-group with generators $g_1, \dots, g_r$, and let $\widetilde{G} = \sum_{g\in G}g$ denote the sum of all group elements. Let $\Gamma$ be the maximal $\mathbb{Z}$-order in the rational group algebra $\mathbb{Q}[G]$. We define $J = |G|\Gamma$ and $I = J \cap p\mathbb{Z}[G]$ as ideals of $\mathbb{Z}[G]$. As the structure  of $\Gamma$ is well understood \cite[Lemma 3.3]{ChenHong2014},  we have the relations $J = I + \widetilde{G}\mathbb{Z}$ and $p\widetilde{G} \in I$ hold.

\begin{lemma} \label{K2FpG}
Let $G$ be as above with $|G|>2$, and set $x_i=g_i-1$. Then $K_2(\mathbb{F}_p[G],(\widetilde{G}))$ is an elementary abelian $p$-group of rank $r$ with basis $\{\langle x_i , \prod_{j=1}^{r}x_{j}^{p^{n_j}-1}\rangle \mid 1\leq i\leq r\}.$
\end{lemma}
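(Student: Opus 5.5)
We identify the ideal $(\widetilde{G})$ in $\mathbb{F}_p[G]$ with $(N)$, where $N=\prod_{j=1}^r x_j^{p^{n_j}-1}$. Writing $x_j=g_j-1$, we have $\mathbb{F}_p[G]=\mathbb{F}_p[x_1,\dots,x_r]/(x_1^{p^{n_1}},\dots,x_r^{p^{n_r}})$. Since $\sum_{k=0}^{p^{n}-1}g^k=(g-1)^{p^n-1}$ in characteristic $p$, we get $\widetilde{G}=N$. This element spans the socle of the local ring $\mathbb{F}_p[G]$, so $(\widetilde{G})=\mathbb{F}_p N$ is one-dimensional over $\mathbb{F}_p$ and is annihilated by the maximal ideal $\mathfrak m=(x_1,\dots,x_r)$. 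In particular $(\widetilde{G})^2=0$.

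The plan is to apply Theorem \ref{main-thm1} with $A=\mathbb{F}_p[G]$, ideal $I=0$, and $J=(\widetilde{G})$. I take $b_i=x_i^{p^{n_i}-1}$, so that $b_1\cdots b_r=N$. The hypothesis $b_iJ\subset I=0$ holds as soon as each $b_i\in\mathfrak m$, because $\mathfrak m N=0$. This needs $p^{n_i}>1$, which is automatic. The hypothesis $r>1$ is the problem, since the lemma only assumes $|G|>2$ and so allows $r=1$.

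In the cyclic case I would regroup the factors instead. Write $N=x_1^{p^{n}-1}=x_1\cdot x_1^{p^n-2}$ with two factors $b_1=x_1$ and $b_2=x_1^{p^n-2}$. Both lie in $\mathfrak m$ exactly when $p^n-2\ge1$, that is, when $|G|>2$. This is where the hypothesis is used. The proof of Theorem \ref{main-thm1} only uses that $J$ is generated modulo $I$ by a product of at least two elements each killing $J$ into $I$, so this regrouping is legitimate. The same trick also covers $r\ge2$ directly.

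The theorem then gives
\[
K_2(\mathbb{F}_p[G],(\widetilde{G}))\cong (\widetilde{G})\otimes_{\mathbb{F}_p[G]}\Omega_{\mathbb{F}_p[G]}.
\]
By Lemma \ref{Omega}, $\Omega_{\mathbb{F}_p[G]}$ is free on $\mathrm{d}g_i=\mathrm{d}x_i$, so the tensor product is $\bigoplus_{i=1}^r (\widetilde{G})\,\mathrm{d}x_i\cong \mathbb{F}_p^r$. This is elementary abelian of rank $r$ with basis $N\otimes \mathrm{d}x_i$.

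It remains to identify this basis with the stated symbols. Under the map $\rho$ in the proof of Theorem \ref{main-thm1}, we have $\rho\langle x_i,N\rangle=-N\otimes \mathrm{d}x_i$, since the second entry lies in $J$. Because $\rho$ is an isomorphism here, the symbols $\langle x_i,N\rangle$ form a basis, as the sign does not matter.

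The main obstacle is the careful justification that the factorization used in the theorem may be taken as $N=b_1b_2$ with both factors in $\mathfrak m$. I would also check that Theorem \ref{main-thm1} applies with $I=0$, meaning that Weibel's sequence and the vanishing of $\psi$ go through unchanged. In addition, I would verify that the isomorphism $J/I\otimes_{A/J}\Omega_{A/J}\cong J/I\otimes_{A/I}\Omega_{A/I}$ is compatible with $\rho$, so that the explicit basis transfers.
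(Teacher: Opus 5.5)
Your proposal is correct and follows the paper's proof. You apply Theorem \ref{main-thm1} to $A=\mathbb{F}_p[G]$ with the zero ideal and $J=(\widetilde{G})$, use Lemma \ref{Omega} to get $(\widetilde{G})\otimes_{\mathbb{F}_p[G]}\Omega_{\mathbb{F}_p[G]}\cong\mathbb{F}_p^r$, and transport the basis $N\otimes\mathrm{d}x_i$ through $\rho$. Your regrouping $N=x_1\cdot x_1^{p^n-2}$ is a direct instance of the theorem's hypothesis, since the theorem's $r$ counts factors rather than the $p$-rank. It makes explicit the paper's remark that $\widetilde{G}$ has degree $>1$ exactly when $|G|>2$, and your socle observation supplies the one-dimensionality of $(\widetilde{G})$ that the paper uses tacitly when concluding rank $r$.
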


\begin{proof}
Note that $x_{i}G=0$ for all $1 \leq i\leq r$, and the following identity holds in $\mathbb{F}_p[G]$:
$$\widetilde{G}=\prod_{j=1}^{r}\left(\sum_{i=0}^{p^{n_j}-1}(g_j)^i\right)= \prod_{j=1}^{r}(g_j-1)^{p^{n_j}-1}= \prod_{j=1}^{r}x_{j}^{p^{n_j}-1}.$$
Thus, $\widetilde{G}$ has degree greater than 1 if and only if  $|G|>2$. Since $|G|>2$ holds, according to Theorem \ref{main-thm1}, there exists an isomorphism
$$K_2(\mathbb{F}_p[G],(\widetilde{G})) \cong (\widetilde{G}) \otimes_{\mathbb{F}_p[G]} \Omega_{\mathbb{F}_p[G]}.$$
By Lemma \ref{Omega}, $\Omega_{\mathbb{F}_p[G]}$ is a free $\mathbb{F}_p[G]$-module of rank $r$. Since $p\widetilde{G} = 0$ in $\mathbb{F}_p[G]$, the group $K_2(\mathbb{F}_p[G],(\widetilde{G}))$ has exponent $p$, which implies that it is an elementary abelian $p$-group of rank $r$. Consequently, by Lemma \ref{Omega}, the set $\{(\prod_{j=1}^{r}x_{j}^{p^{n_j}-1}) \otimes \mathrm{d} x_i \mid 1\leq i\leq r\}$ constitutes a basis for $(\widetilde{G})\otimes_{\mathbb{F}_p[G]} \Omega_{\mathbb{F}_p[G]}$. Via the map $\rho$ defined in Theorem \ref{main-thm1}, this basis corresponds to a basis of $K_2(\mathbb{F}_p[G],(\widetilde{G}))$, and the result follows.
\end{proof}

\begin{remark}
$K_2(\mathbb{F}_p[G],(\widetilde{G}))$ is trivial when $G=C_2$, as noted in \cite[Lemma 4.1]{ChenHong2014}.
\end{remark}

\begin{proposition}\label{excision}
Let $G,I,J$ be as above with $|G|>2$. Then 
$$K_2(\mathbb{Z}G/I,(\widetilde{G})) \cong
K_2(\mathbb{F}_p[G],(\widetilde{G})).$$
\end{proposition}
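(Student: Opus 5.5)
Both sides will be computed with Theorem \ref{main-thm1} and then compared through the natural reduction map $\pi:\mathbb{Z}[G]/I\to\mathbb{F}_p[G]$ from the Cartesian square \eqref{cartesian}. First note that $\pi$ sends the ideal $J/I=(\widetilde{G})$ of $\mathbb{Z}[G]/I$ isomorphically onto $(\widetilde{G})\subset\mathbb{F}_p[G]$. Indeed, $J=I+\widetilde{G}\mathbb{Z}$ and $p\widetilde{G}\in I$, so $J/I$ is cyclic of order dividing $p$, generated by $\widetilde{G}$. The image of $\widetilde{G}$ in $\mathbb{F}_p[G]$ is nonzero, so $J/I\cong\mathbb{F}_p\widetilde{G}\cong(\widetilde{G})$. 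Here $(\widetilde{G})=\mathbb{F}_p\widetilde{G}$ because $g\widetilde{G}=\widetilde{G}$. The map $\pi$ then induces a homomorphism $\pi_*:K_2(\mathbb{Z}[G]/I,J/I)\to K_2(\mathbb{F}_p[G],(\widetilde{G}))$, and the goal is to show that $\pi_*$ is an isomorphism.

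\textbf{Step 1: apply Theorem \ref{main-thm1} to $A=\mathbb{Z}[G]$.} Take the ideal $I$ and $J=I+\widetilde{G}\mathbb{Z}$. Since $\widetilde{G}\mathbb{Z}[G]=\widetilde{G}\mathbb{Z}$, we have $J=\widetilde{G}A+I$. I want to write $\widetilde{G}\equiv b_1\cdots b_s$ modulo $I$ with $s>1$ and $b_iJ\subset I$. The natural choice is $b$'s taken from the factors $x_j=g_j-1$ (repeated), possibly together with $p$. We have $x_j\widetilde{G}=0$ in $\mathbb{Z}[G]$, and $x_jI\subset I$ provided $I$ is an ideal. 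However, $\prod_j x_j^{p^{n_j}-1}$ equals $\widetilde{G}$ only modulo $p$, and I need the congruence modulo $I$. This is the delicate point. I will use the structure of $\Gamma$ from \cite[Lemma 3.3]{ChenHong2014}: the difference $\prod_j x_j^{p^{n_j}-1}-\widetilde{G}$ lies in $p\mathbb{Z}[G]$ and is killed by the trivial character, hence lies in the augmentation-type part. I will check that it lies in $|G|\Gamma$, so in $I$. If this fails, the fallback is $b_1=p$, $b_2=\widetilde{G}/p$ viewed suitably, which does not obviously work. So the main obstacle is this identity modulo $I$, which will require the explicit description of $J$. Once it holds, $|G|>2$ gives total degree $>1$, so $s>1$. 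Theorem \ref{main-thm1} then yields
\[
K_2(\mathbb{Z}[G]/I,J/I)\cong J/I\otimes_{\mathbb{Z}[G]/I}\Omega_{\mathbb{Z}[G]/I}.
\]
The same theorem, as in the proof of Lemma \ref{K2FpG}, gives $K_2(\mathbb{F}_p[G],(\widetilde{G}))\cong(\widetilde{G})\otimes_{\mathbb{F}_p[G]}\Omega_{\mathbb{F}_p[G]}$. Both isomorphisms are induced by the natural map $\rho$, so they are compatible with $\pi$.

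\textbf{Step 2: compare the differential modules.} Since $J/I$ is killed by $p$ and by the augmentation ideal, it is a module over $\mathbb{F}_p=\mathbb{Z}[G]/(p,x_1,\dots,x_r)$. Hence
\[
J/I\otimes_{\mathbb{Z}[G]/I}\Omega_{\mathbb{Z}[G]/I}\cong J/I\otimes_{\mathbb{F}_p[G]}\bigl(\mathbb{F}_p[G]\otimes_{\mathbb{Z}[G]/I}\Omega_{\mathbb{Z}[G]/I}\bigr).
\]
By the conormal sequence (Lemma \ref{Conormal}) for the surjection $\mathbb{Z}[G]/I\to\mathbb{F}_p[G]$ with kernel $p\mathbb{Z}[G]/I$, the module in parentheses maps onto $\Omega_{\mathbb{F}_p[G]}$. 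The kernel of this map is generated by the elements $\mathrm{d}(pa)=p\,\mathrm{d}a+a\,\mathrm{d}p=p\,\mathrm{d}a$, and these are $0$ after tensoring with $\mathbb{F}_p[G]$. Therefore $\mathbb{F}_p[G]\otimes\Omega_{\mathbb{Z}[G]/I}\cong\Omega_{\mathbb{F}_p[G]}$.

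Tensoring this with $J/I\cong(\widetilde{G})$ identifies the two right-hand sides: each is $\mathbb{F}_p^r$ with basis $\widetilde{G}\otimes\mathrm{d}g_i$, by Lemma \ref{Omega}. The identification is induced by $\pi$, so $\pi_*$ is an isomorphism.
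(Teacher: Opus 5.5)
Your strategy of applying Theorem~\ref{main-thm1} to both relative groups and comparing them is the paper's, and your Step~2 is fine. But Step~1 has a genuine gap, exactly at the point you flagged.

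Your choice of factors requires $\prod_j x_j^{p^{n_j}-1}\equiv\widetilde{G}\pmod I$. This congruence holds for elementary abelian $G$. It is false as soon as some $n_j\ge 2$, because the difference does not lie in $J=|G|\Gamma$. For $G=C_4$ one gets $(g-1)^3-\widetilde{G}=-2+2g-4g^2$, and its image under the character $g\mapsto i$ is $2+2i\notin 4\mathbb{Z}[i]$. In general, take a character $\chi$ with each $\chi(g_j)$ a primitive $p^{n_j}$-th root of unity. The image of the difference then has $p$-adic valuation $\sum_j(1+p^{-1}+\cdots+p^{-(n_j-1)})$, which is $<\sum_j n_j$. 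Your fallback $b_1=p$ cannot work either. Since $I\subset p\mathbb{Z}[G]$, any congruence $\widetilde{G}\equiv p\,y\pmod I$ would force $\widetilde{G}\in p\mathbb{Z}[G]$, which is false.

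The missing idea is to factor $\widetilde{G}$ exactly in $\mathbb{Z}[G]$ rather than modulo $p$. The paper takes the norm elements $b_j=\sum_{i=1}^{p^{n_j}}g_j^i$. Then $\widetilde{G}=\prod_{j=1}^r b_j$ holds on the nose, and $b_j\widetilde{G}=p^{n_j}\widetilde{G}\in I$ because $p\widetilde{G}\in I$; hence $b_jJ\subset I$. More conceptually, $J/I\cong\mathbb{F}_p$ is killed by $\mathfrak{m}=(p,x_1,\dots,x_r)$. So any expression of $\widetilde{G}$ modulo $I$ as a product of at least two elements of $\mathfrak{m}$ will do.

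The norm-element product has only $r$ factors, so it handles $r>1$. The statement also allows cyclic $G$ with $|G|>2$, and there you need a separate two-factor splitting:
\begin{itemize}
\item for $G=C_{p^n}$ with $n\ge 2$, use $\widetilde{G}=\bigl(\sum_{i=0}^{p^{n-1}-1}g^i\bigr)\bigl(\sum_{k=0}^{p-1}g^{kp^{n-1}}\bigr)$;
\item for $G=C_p$ with $p$ odd, use your own congruence $x^{p-1}\equiv\widetilde{G}\pmod I$, which is valid since $(\zeta_p-1)^{p-1}\in p\mathbb{Z}[\zeta_p]$.
\end{itemize}

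Once Step~1 is repaired this way, your Step~2 is a valid and more explicit replacement for the paper's finish. Your Step~2 uses the conormal sequence for $\mathbb{Z}[G]/I\to\mathbb{F}_p[G]$ to identify $\mathbb{F}_p[G]\otimes\Omega_{\mathbb{Z}[G]/I}$ with $\Omega_{\mathbb{F}_p[G]}$. The paper instead argues by rank:
\begin{itemize}
\item $K_2(\mathbb{Z}[G]/I,J/I)$ is elementary abelian of rank at most $r$;
\item it surjects onto $K_2(\mathbb{F}_p[G],(\widetilde{G}))$;
\item the latter has rank $r$ by Lemma~\ref{K2FpG}.
\end{itemize}
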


\begin{proof}
Let $A = \mathbb{Z}[G]$, $I_1 = I$, $I_2 = pA$, $J_1 = J$, and $J_2 = (\widetilde{G})A + I_2$. For each $1 \leq j \leq r$, define $b_j = \sum_{i=1}^{p^{n_j}} g_j^i$. Then we have $J_1/I_1 = \widetilde{G}(A/I_1)$ and $J_2/I_2 = \widetilde{G}(A/I_2)$. Since $\widetilde{G} = \prod_{j=1}^{r} b_j$ and $p(J_1/I_1)=0$, it follows from Theorem \ref{main-thm1} that $K_2(A/I_1, J_1/I_1)$ is an elementary abelian $p$-group of rank at most $r$, and that there exists a surjective map from $K_2(A/I_1, J_1/I_1)$ to $K_2(A/I_2, J_2/I_2)$. By Lemma \ref{K2FpG}, the $p$-rank of $K_2(A/I_2, J_2/I_2)$ is exactly $r$. Comparing these ranks, we conclude that $K_2(A/I_1, J_1/I_1)$ must also be of rank $r$, and therefore the aforementioned surjection is an isomorphism.
\end{proof}

\begin{remark}
Since $I_1 = J_1 \cap I_2$, we have $(J_1/I_1) \cap (I_2/I_1) = 0$. Using this fact,
by Theorem 14 in \cite{keune1978relativization}, we obtain an exact sequence:
$$ (J_1/I_1)/(J_1/I_1)^2 \otimes_{A^{e}} (I_2/I_1)/(I_2/I_1)^2 \xrightarrow{\psi} K_2(A/I_1, J_1/I_1) \rightarrow K_2(A/I_1, J_2/I_2) \rightarrow 1, $$
where $A^{e} = A \otimes_{\mathbb{Z}} A^{op}$ and the map $\psi$ is defined by $\psi(\overline{a} \otimes_{A^{e}} \overline{b}) = \langle a, b \rangle$. 
In our case, since $(J_1/I_1)^2 = 0$, Remark \ref{remak1} ensures that the Dennis-Stein symbol $\langle a, b \rangle$ is additive in both components. Given that $p\widetilde{G} \in (J_1/I_1) \cap (I_2/I_1) = 0$ in $A/I_1$, for any $s, t \in A$, we have:
$$\psi(\overline{\widetilde{G}s} \otimes_{A^{e}} \overline{pt}) = \langle \widetilde{G}s, pt \rangle = \langle p\widetilde{G}s, t \rangle = \langle 0, t \rangle = 1.$$
Hence, the image of $\psi$ is trivial. This yields an alternative proof of the isomorphism.
\end{remark}

\begin{theorem}\label{main-thm2} Let $G,I,J$ be as above with $r>1$. Then
\begin{equation*}
 K_2(\mathbb{Z}[G]/I)\cong K_2(\mathbb{Z}[G]/J) \oplus K_2(\mathbb{F}_p[G], (\widetilde{G})).
\end{equation*}
\end{theorem}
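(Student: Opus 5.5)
Our plan is to combine the relative exact sequence for the pair $(\mathbb{Z}[G]/I, J/I)$ with the excision isomorphism of Proposition \ref{excision}, and then show the sequence splits. Since $r>1$ forces $|G|>2$, Proposition \ref{excision} and Lemma \ref{K2FpG} both apply.

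First, we write down the relative long exact sequence
$$K_3(\mathbb{Z}[G]/J)\to K_2(\mathbb{Z}[G]/I, J/I)\to K_2(\mathbb{Z}[G]/I)\to K_2(\mathbb{Z}[G]/J)\to K_1(\mathbb{Z}[G]/I,J/I)\to K_1(\mathbb{Z}[G]/I).$$
The right end is exact on the left because $J/I$ is nilpotent, so $K_1(\mathbb{Z}[G]/I,J/I)=1+J/I$ injects into the units of $\mathbb{Z}[G]/I$. Hence $K_2(\mathbb{Z}[G]/I)\to K_2(\mathbb{Z}[G]/J)$ is surjective. All groups involved are finite $p$-groups, since the rings are finite local-type rings of $p$-power order.

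Next, we prove injectivity on the left. By Proposition \ref{excision} and Lemma \ref{K2FpG}, $K_2(\mathbb{Z}[G]/I,J/I)$ is elementary abelian of rank $r$, with basis $\langle x_i,\widetilde{G}\rangle$, $x_i=g_i-1$. The map $\mathbb{Z}[G]/I\to\mathbb{F}_p[G]$ carries these relative classes to the basis of $K_2(\mathbb{F}_p[G],(\widetilde{G}))$. To see that their images in $K_2(\mathbb{Z}[G]/I)$ stay independent, we compose further to $K_2(\mathbb{F}_p[G])$ and use that $\mathbb{F}_p[G]$ is local artinian with residue field $\mathbb{F}_p$. We use the known description of $K_2$ of truncated polynomial-type rings, or a Kähler-differential map $K_2(\mathbb{F}_p[G])\to\Omega_{\mathbb{F}_p[G]}$, $\langle a,b\rangle\mapsto$ an appropriate $\mathrm{d}\log$-type expression, under which $\langle x_i,\widetilde{G}\rangle\mapsto \widetilde{G}\,\mathrm{d}x_i$ up to unit factors. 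By Lemma \ref{Omega} these elements are $\mathbb{F}_p$-independent. Thus $K_2(\mathbb{F}_p[G],(\widetilde{G}))\to K_2(\mathbb{F}_p[G])$ is injective. Via the commutative square, $K_2(\mathbb{Z}[G]/I,J/I)\to K_2(\mathbb{Z}[G]/I)$ is injective too, which gives the two short exact sequences stated in the introduction.

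Finally, we prove splitting, which is the main obstacle. Since the kernel $V$ is elementary abelian, it suffices to find a retraction $K_2(\mathbb{Z}[G]/I)\to V$. Our first attempt is to compose $K_2(\mathbb{Z}[G]/I)\to K_2(\mathbb{F}_p[G])\to \Omega_{\mathbb{F}_p[G]}\otimes\mathbb{F}_p[G]/\mathfrak m$-type quotient, or more precisely to project the differential onto the socle coordinates $\widetilde{G}\,\mathrm{d}x_i$. This requires a homomorphism $K_2(\mathbb{F}_p[G])\to \widetilde{G}\,\Omega_{\mathbb{F}_p[G]}\cong\mathbb{F}_p^r$ that restricts to the identity on $V$. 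We would build it from the Bloch/van der Kallen description $K_2(\mathbb{F}_p[G])\cong\Omega_{\mathbb{F}_p[G]}/\mathrm{d}\mathbb{F}_p[G]$ (modulo appropriate terms), followed by a linear functional reading off the top-degree coefficients. The hypothesis $r>1$ enters here. The elements $\widetilde{G}\,\mathrm{d}x_i$ are not exact, since an exact form $\mathrm{d}f$ with top coefficients nonzero would need $f$ of multidegree exceeding $p^{n_j}-1$ in several variables simultaneously. This guarantees that the projection is well defined and nonzero on $V$. Alternatively, we would argue directly on exponents: if every element of $V$ is not a $p$-th power in $K_2(\mathbb{Z}[G]/I)$, i.e.\ $V\cap pK_2(\mathbb{Z}[G]/I)=0$, then $V$ is pure and hence a direct summand, giving $K_2(\mathbb{Z}[G]/I)\cong K_2(\mathbb{Z}[G]/J)\oplus K_2(\mathbb{F}_p[G],(\widetilde{G}))$. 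Purity would again be checked through the differential-form invariant above.
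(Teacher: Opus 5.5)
Your overall plan matches the paper's: surjectivity onto $K_2(\mathbb{Z}[G]/J)$, the excision isomorphism, and a retraction obtained by passing to $K_2(\mathbb{F}_p[G])$ (the paper's $g_1^{-1}jg_2$). Everything therefore rests on one fact: the images of $\langle x_i,\widetilde{G}\rangle$ in $K_2(\mathbb{F}_p[G])$ are independent and span a direct summand. The paper imports this from Zhang--Tang--Liu, who show these elements are part of a basis of $K_2(\mathbb{F}_p[G])$ when $r>1$. Your argument for it does not work.

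There is no homomorphism $K_2(\mathbb{F}_p[G])\to\Omega_{\mathbb{F}_p[G]}$, nor to $\Omega_{\mathbb{F}_p[G]}/\mathrm{d}\mathbb{F}_p[G]$, sending $\langle x_i,\widetilde{G}\rangle$ to a unit multiple of $\widetilde{G}\,\mathrm{d}x_i$. Identifying relative $K_2$ with $\Omega^1$ modulo exact forms is a characteristic-zero phenomenon and fails over $\mathbb{F}_p$. The map $\rho$ of Theorem~\ref{main-thm1} lives only on the relative group and does not extend.

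The case $r=1$ exposes this. For $G$ cyclic of order $p^n>2$ one has $K_2(\mathbb{F}_p[G])=0$. Yet $\widetilde{G}\,\mathrm{d}x=x^{p^n-1}\mathrm{d}x$ is nonzero and not exact, since $\mathrm{d}(x^k)=kx^{k-1}\mathrm{d}x$ and $x^{p^n}=0$. So your reasoning would ``prove'' that $K_2(\mathbb{F}_p[G],(\widetilde{G}))\cong\mathbb{Z}/p$ injects into $0$. For the same reason your account of where $r>1$ enters is wrong. Non-exactness of $\widetilde{G}\,\mathrm{d}x_i$ comes from the single variable $x_i$ and holds for every $r$. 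The hypothesis $r>1$ is genuinely needed, because for $r=1$ the map $K_2(\mathbb{F}_p[G],(\widetilde{G}))\to K_2(\mathbb{F}_p[G])$ is zero.

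The idea can be rescued by going one degree up. The $\mathrm{d}\log$ map $K_2(R)\to\Omega^2_R$ (e.g.\ the Dennis trace followed by $HH_2(R)\to\Omega^2_R$) is a homomorphism for any commutative ring, with $\langle a,b\rangle\mapsto\pm\,\mathrm{d}a\wedge\mathrm{d}b/(1-ab)$. In characteristic $p$ we have $x_i\widetilde{G}=0$ and $\mathrm{d}\widetilde{G}=-\sum_j(\widetilde{G}/x_j)\,\mathrm{d}x_j$, where $\widetilde{G}/x_j=x_j^{p^{n_j}-2}\prod_{k\neq j}x_k^{p^{n_k}-1}$. Hence $\langle x_i,\widetilde{G}\rangle\mapsto\mp\sum_{j\neq i}(\widetilde{G}/x_j)\,\mathrm{d}x_i\wedge\mathrm{d}x_j$.

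By Lemma~\ref{Omega}, $\Omega^2_{\mathbb{F}_p[G]}$ is free on the $\mathrm{d}x_i\wedge\mathrm{d}x_j$ ($i<j$). In a combination $\sum_k c_k(\cdot)$, the coefficient of $\mathrm{d}x_i\wedge\mathrm{d}x_j$ is $\mp(c_i\,\widetilde{G}/x_j-c_j\,\widetilde{G}/x_i)$, a combination of two distinct basis monomials. So these $r$ forms are $\mathbb{F}_p$-independent exactly when $r\geq 2$; for $r=1$, $\Omega^2=0$. Since $\Omega^2_{\mathbb{F}_p[G]}$ is an $\mathbb{F}_p$-vector space, compose $K_2(\mathbb{Z}[G]/I)\to K_2(\mathbb{F}_p[G])\to\Omega^2_{\mathbb{F}_p[G]}$ with a linear projection onto their span. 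This gives both the injectivity and the retraction you need.

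With that substitution your proof becomes a self-contained replacement for the citation. As written, the decisive step is missing.
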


\begin{proof}
The Cartesian square \eqref{cartesian} gives rise to a natural commutative diagram with exact rows:
\begin{equation*}
\begin{tikzcd}
& K_2(\mathbb{Z}[G]/I, J/I) \arrow[r, "f_1"] \arrow[d, "g_1"] & K_2(\mathbb{Z}[G]/I) \arrow[r, "f_2"] \arrow[d, "g_2"] & K_2(\mathbb{Z}[G]/J) \arrow[r] \arrow[d, "g_3"] & 1 \\
 & K_2(\mathbb{F}_p[G], (\widetilde{G})) \arrow[r, "i"]        & K_2(\mathbb{F}_p[G]) \arrow[r]                         & K_2(\mathbb{F}_p[G]/(\widetilde{G})) \arrow[r] & 1
\end{tikzcd}
\end{equation*}
where the surjectivity of the maps follows from \cite[Proposition 1.1]{alperin1985sk}. The homomorphism $i$ is injective: each generator of $K_2(\mathbb{F}_p[G], (\widetilde{G}))$ has order $p$, and each corresponding image of $i$ denoted by the same literal, also has order $p$ and lies in a basis for $K_2\left(\mathbb{F}_p[G]\right)$ when $r>1$ (see \cite{zhang2019some}). As a result, the bottom exact sequence splits: the monomorphism $i$ has a section, which we denote by $j$. Since  $g_1$ is an isomorphism by Proposition \ref{excision}, the commutativity of the first square implies that $f_1$ is injective.  Consequently, $g_{1}^{-1} j g_{2}$ is a section of $f_1$, showing that $f_1$ is a split injection. The top exact sequence therefore splits, and we obtain
\begin{equation*}
  K_2(\mathbb{Z}[G]/I)\cong K_2(\mathbb{Z}[G]/J) \oplus K_2(\mathbb{F}_p[G], (\widetilde{G})).
\end{equation*}
\end{proof}
\begin{remark}
By the Snake Lemma, we have  $\mathrm{Ker}(g_2)\cong \mathrm{Ker}(g_3)$. In addition, the corresponding square for $K_2$​-groups in \eqref{cartesian} is Cartesian in the category of groups.
\end{remark}

\sec{Acknowledgement}

We thank the referees for their time and comments.
 \sec{References} \footnotesize \baselineskip
13pt
{

\end{document}
\parindent=0pt
\vspace*{-1.2cm}

\begin{thebibliography}{99}


\bibitem{smallK2}
Zhang, Y. and Tang, G.P., {$K_2$} for a kind of special finite group rings, {\em J. Univ. Chinese Acad. Sci.}, 2018, 35(6):721.

\bibitem{alperin1985sk}
Alperin, R.C., Dennis, R.K. and Stein, M.R., ${SK_1}$ of finite abelian groups. {I}, {\em Invent. Math.}, 1985, 82(1):1--18.

\bibitem{ChenHong2014}
Chen, H., Gao, Y.B. and Tang, G.P., ${K_2}$ of a Quotient Ring of $\mathbb{Z}{G}$, {\em Comm. Algebra}, 2014, 42(4):1571--1581.

\bibitem{weibel2013kbook}
Weibel, C.A., {\em The {$K$}-book: An Introduction to Algebraic {$K$}-theory}, volume 145 of {\em Grad. Stud. Math.}. Amer. Math. Soc., Providence, RI, 2013.

\bibitem{weibel1980k2}
Weibel, C.A., ${K_2}$, ${K_3}$ and nilpotent ideals, {\em J. Pure Appl. Algebra}, 1980, 18(3):333--345.

\bibitem{alperin1987sk}
Alperin, R.C., Dennis, R.K., Oliver, R. and Stein, M.R., ${SK_1}$ of finite abelian groups. {II}, {\em Invent. Math.}, 1987, 87(2):253--302.

\bibitem{eisenbud2013commutative}
Eisenbud, D., {\em Commutative Algebra: With a View Toward Algebraic Geometry}. Springer, 2013.

\bibitem{magurn2007explicit}
Magurn, B.A., Explicit ${K_2}$ of some finite group rings, {\em J. Pure Appl. Algebra}, 2007, 209(3):801--811.


\bibitem{keune1978relativization}
Keune, F., The relativization of ${K_2}$, {\em J. Algebra}, 1978, 54(1):159--177.

\bibitem{zhang2019some}
Zhang, H., Tang, G.P. and Liu, H., Some remarks on ${K_2}$ and ${K_3}$ of finite abelian group algebras, {\em J. Algebra Appl.}, 2019, 18(5):1950094.

\end{thebibliography}
